\newcommand{\real}{\mathbb{R}}
\newcommand{\bs}[1]{\mathbf{#1}}
\newtheorem{theorem}{Theorem}[section]
\newtheorem{corollary}{Corollary}[section]
\newtheorem{lemma}{Lemma}[section]
\theoremstyle{definition}
\newtheorem{definition}{Definition}[section]
\title{Anti-regular graphs with loops and their spectrum}
\author{Cesar O. Aguilar\footnote{Department of Mathematics, State University of New York, Geneseo, aguilar@geneseo.edu}}
\begin{document}

\maketitle

\begin{abstract}
An anti-regular graph is a graph whose degree sequence has only two repeated entries.  It is known that for each integer $n\geq 1$ there are two non-isomorphic anti-regular graphs on $n$ vertices and such graphs are examples of threshold graphs.  We show that if one allows loops in a graph, then there are two non-isomorphic graphs whose degree sequence contains all distinct entries and we compute closed-form expressions for the adjacency eigenvalues of such graphs.
\end{abstract}

\section{Introduction}

Let $G=(V,E)$ be an $n$-vertex simple graph, that is, a graph without loops or multiple edges.  If $n\geq 2$ then it is well-known that $G$ contains at least two vertices of equal degree.  If all vertices of $G$ have equal degree then $G$ is called a \textit{regular} graph.  It is then natural to say that $G$ is \textit{anti-regular} if $G$ has only two vertices of equal degree.  If $G$ is anti-regular then the complement graph $\overline{G}$ is easily seen to be anti-regular, and moreover, if $G$ is connected then $\overline{G}$ is disconnected.  It was shown in \cite{MB-GC:67} that up to isomorphism, there is only one connected anti-regular graph on $n$ vertices and that its complement is the unique disconnected $n$-vertex anti-regular graph.  We denote by $A_n$ the unique connected anti-regular graph on $n\geq 2$ vertices.  

An anti-regular graph is an example of a \textit{threshold graph} \cite{NM-UP:95}.  Threshold graphs were first studied by Chv\'{a}tal and Hammer \cite{VC-PH:77} (see also \cite{PH-YZ:77}) in the problem of aggregating a set of linear inequalities in integer programming problems.  Threshold graphs have been studied extensively and have applications in parallel computing, linear programming, resource allocation problems, and psychology \cite{NM-UP:95}.  Threshold graphs are also employed as reasonable models of real-world social networks, friendship networks, peer-to-peer networks, and networks of computer programs \cite{PD-SH-SJ:08, YI-NK-NO:09}.

Within the family of threshold graphs, the anti-regular graph $A_n$ plays a prominent role.  For instance, $A_n$ is uniquely determined by its \textit{independence polynomial} \cite{VL-EM:12}.   It was shown in \cite{RM:03} that $A_n$ is \textit{universal for trees}, that is, every tree graph on $n$ vertices is isomorphic to a subgraph of $A_n$.   Also, the eigenvalues of the Laplacian matrix of $A_n$ are all distinct integers and the missing eigenvalue from $\{0,1,\ldots, n\}$ is $\lfloor (n+1)/2\rfloor$.  In \cite{EM:09}, the characteristic and matching polynomial of $A_n$ are studied and several recurrence relations are obtained for these polynomials, along with some spectral properties of the adjacency matrix of $A_n$.  Recently in \cite{CA-JL-EP-BS:18}, a fairly complete characterization of the eigenvalues of the adjacency matrix of $A_n$ was obtained.  In particular, it was shown that the interval $\Omega=[\frac{-1-\sqrt{2}}{2}, \frac{-1+\sqrt{2}}{2}]$ contains only the eigenvalues $-1$ or $0$ (depending on wether $n$ is even or odd) and that as $n\rightarrow\infty$, the eigenvalues are symmetric about $-\frac{1}{2}$ and are dense in $(-\infty,\frac{-1-\sqrt{2}}{2}]\cup\{0,-1\}\cup [\frac{-1+\sqrt{2}}{2},\infty)$.  It is conjectured in \cite{CA-JL-EP-BS:18} that in fact the only eigenvalues of any threshold graph contained in $\Omega$ are $-1$ and/or $0$, which if true would improve the forbidden interval bound obtained in \cite{DJ-VT-FT:15}, and furthermore would show that the eigenvalues of $A_n$ are optimal when viewed within the family of threshold graphs.

In this note, we consider the anti-regular graph with loops.  Random threshold graphs with loops have been considered in \cite{AB-AS:07, PD-SH-SJ:08, YI-NK-NO:09} where asymptotic properties such as the spectral distribution and the rank of the adjacency matrix were studied.    In this paper, we show that if one allows loops in a graph then there are two non-isomorphic graphs whose vertex degrees are \textit{all} distinct.    Not surprisingly, these anti-regular graphs with loops have as their underlying simple graph the anti-regular simple graphs.  Exploiting the structure of the adjacency matrix of an anti-regular graph, we perform similarity transformations that lead to closed-form expressions for the eigenvalues of the adjacency matrix of the anti-regular graphs with loops.  As in the case of simple threshold graphs (i.e., with no loops), we conjecture that the spectral properties of anti-regular graph with loops will play an important role within the class of threshold graphs with loops.  This will be considered in a future investigation. 

\section{The anti-regular graph with loops}

By a \textit{graph with loops} we mean a pair $G=(V,E)$ where $V=\{v_1,v_2,\ldots,v_n\}$ is a finite vertex set and $E$ is a set containing 2-element multi-sets of $V$, that is, $E \subset \{ \{u,v\}\;|\; u, v \in V\}$.  Hence, the only extra freedom we allow from simple graphs is that some vertices may contain at most one self-loop.  The degree of a vertex in a graph with loops is the number of edges incident to the vertex with loops counted once.  As usual, if $G$ has no loops then we will call $G$ a \textit{simple graph}.

A graph with loops $G=(V,E)$ is a \textit{threshold} graph if there exists a vertex weight function $w:V\rightarrow [0,\infty)$ and $t\geq 0$, called the \textit{threshold}, such that for every $X\subset V$, $X$ is an independent set if and only if $\sum_{v\in X} w(v) \leq t$.  In resource allocation problems, the weight $w(v_i)$ is interpreted as the amount of resources used by vertex $v_i$ and thus a subset of the vertices $X$ is an admissible set of vertices if the total amount of resources needed by $X$ is no more than the allowable threshold $t$.  Of the several characterizations of threshold graphs \cite{NM-UP:95}, the most convenient is a one-to-one correspondence between threshold graphs and binary sequences.  Given a binary sequence $b=(b_1,b_2,\ldots,b_n)\in\{0,1\}^n$, we use a recursive process using the join and union graph operations to construct a $n$-vertex graph $G=G(b)$ as follows.  Start with a single vertex $v_1$ and add the loop $\{v_1,v_1\}$ if and only if $b_1=1$.  Then add a new vertex $v_2$ and connect $v_2$ with all existing vertices $\{v_1,v_2\}$ if and only if $b_2=1$.  Iteratively, if the vertices $\{v_1, v_2,\ldots,v_k\}$ have been added, add vertex $v_{k+1}$ and connect it to all existing vertices $\{v_1,v_2,\ldots,v_{k+1}\}$ if and only if $b_{k+1} = 1$.  If $b_{j} = 1$ we call $v_j$ a \textit{dominating vertex}; in this case there is a loop at $v_j$ and $v_j$ is adjacent to all vertices $\{v_1,\ldots,v_{j-1}\}$.  If $b_j=0$ then $v_j$ is an \textit{isolated vertex}.  Thus, an $n$-vertex threshold graph is connected if and only if $b_n=1$.  It is readily seen that the adjacency matrix of $G=G(b)$ is 
\[
A(G) = \begin{bmatrix}
b_1 & b_2 & b_3 & \cdots & \cdots & b_n\\
b_2 & b_2 & b_3 & \cdots & \cdots & b_n\\
b_3 & b_3 & b_3 & \cdots & \cdots & \vdots\\
\vdots & \vdots & \vdots & \ddots & \vdots & b_n\\
\vdots & \vdots & \vdots & \vdots & \ddots & \vdots\\
b_n & b_n & \cdots & \cdots & b_n & b_n
\end{bmatrix} .  
\]

\begin{definition}
A graph $G=(V,E)$ with loops is called an \textit{anti-regular graph with loops} if all vertices of $G$ have distinct degrees.
\end{definition}

For any graph $G=(V,E)$ we denote its degree sequence by 
\[
d(G)=(d_1,d_2,\ldots,d_n)
\]
and we assume without loss of generality that $\textup{deg}(v_i) = d_i$ for $i\in\{1,2,\ldots,n\}$ and $d(G)$ is a non-decreasing sequence.  The number of non-decreasing sequences $(d_1,d_2,\ldots,d_n)$ with no repeated entries and such that $d_i \in \{0,1,\ldots, n\}$ is clearly $n+1$.  Each such sequence is characterized by the number in $\{0,1,\ldots,n\}$ not present in the sequence.  For $n=1$ there are two anti-regular graphs with loops on $n$ vertices; one is the null graph with one vertex which we denote by $H_1$ and the second is the graph with one vertex and one loop which we denote by $G_1$.  Thus, $d(H_1) = (0)$ and $H_1$ is disconnected, and $d(G_1) = (1)$ and $G_1$ is connected.  If $G$ is an anti-regular graph with loops on $n\geq 2$ vertices with degree sequence $d(G) = (d_1,d_2,\ldots,d_n)$ then if $d_n = n$ then necessarily $d_1 = 1$ and if $d_1=0$ then necessarily $d_n = n-1$.  We have therefore proved the following.  
\begin{lemma}
If $G$ is an anti-regular graph with loops on $n\geq 1$ vertices then either $d(G) = (1,2,\ldots,n)$ or $d(G) = (0,1,\ldots, n-1)$.  
\end{lemma}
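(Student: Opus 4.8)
The plan is to combine a pigeonhole count with the single structural obstruction that a vertex of degree $n$ and a vertex of degree $0$ cannot coexist. First I would record the range of possible degrees: in an $n$-vertex graph with loops every vertex $v$ satisfies $0\le \deg(v)\le n$, and $\deg(v)=n$ forces $v$ to carry a loop and to be adjacent to each of the other $n-1$ vertices (a loop is counted once in the degree, so without a loop the maximum degree is $n-1$). Since $G$ is anti-regular, its degrees $d_1<d_2<\cdots<d_n$ are $n$ distinct integers drawn from the $(n+1)$-element set $\{0,1,\ldots,n\}$, so exactly one value $m$ of that set is absent from $d(G)$.

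Next I would establish the obstruction. If some vertex $u$ had degree $n$, then by the remark above $u$ is adjacent to every vertex of $G$, itself included, so $\deg(w)\ge 1$ for all $w\in V$; in particular no vertex can have degree $0$. Hence $0$ and $n$ are not both present in $d(G)$, which means the missing value $m$ is either $0$ or $n$. Reading off the two cases then finishes the argument: if $m=0$ then $\{d_1,\ldots,d_n\}=\{1,2,\ldots,n\}$ and, since $d(G)$ is non-decreasing with distinct entries, $d(G)=(1,2,\ldots,n)$; if $m=n$ then similarly $d(G)=(0,1,\ldots,n-1)$. The degenerate case $n=1$, where the degrees lie in $\{0,1\}$, is subsumed and matches $G_1$ and $H_1$ respectively.

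I do not anticipate a genuine obstacle; the only point requiring a little care is the bookkeeping for loops — fixing the convention that a self-loop contributes exactly $1$ to the degree, so that degree $n$ is attainable and, when attained, forces full adjacency including the loop. Once that convention is in place the result reduces to the pigeonhole count together with the one-line incompatibility of degrees $0$ and $n$, which is essentially the reasoning already sketched in the paragraph preceding the statement.
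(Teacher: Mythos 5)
Your proof is correct and follows essentially the same route as the paper: the degrees are $n$ distinct values in $\{0,1,\ldots,n\}$, so exactly one value is missing, and the incompatibility of a degree-$n$ vertex (looped and adjacent to everything) with a degree-$0$ vertex forces the missing value to be $0$ or $n$. Your write-up is in fact slightly more explicit than the paper's, which compresses this into the remark that $d_n=n$ forces $d_1=1$ and $d_1=0$ forces $d_n=n-1$.
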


We now characterize the anti-regular graphs with loops.  The proof is similar to that of Theorem 2 in \cite{MB-GC:67} where simple graphs are considered.

\begin{theorem}
For each $n\geq 1$ the following hold: 
\begin{enumerate}[(i)]
\item There are two non-isomorphic anti-regular graphs with loops on $n$ vertices, which we denote by $G_n$ and $H_n$.  
\item The graphs $G_n$ and $H_n$ are complementary, and one of them is connected and the other is disconnected.  
\item If $G_n$ is the connected one, then $d(G_n) = (1,2,\ldots, n)$ and \\ $d(H_n) = (0,1,\ldots, n-1)$.
\item Both $G_n$ and $H_n$ are threshold graphs with loops whose binary sequences are alternating.
\item The binary sequence of $G_n$ begins with $n\bmod 2$.
\item The binary sequence of $H_n$ begins with $(n+1)\bmod 2$ and is obtained by flipping the bits in the binary sequence of $G_n$.  
\end{enumerate}
\end{theorem}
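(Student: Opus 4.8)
The plan is to prove all six statements simultaneously by induction on $n$, using the threshold-graph/binary-sequence dictionary recorded in the excerpt. For the base case $n=1$, the only graphs with loops on one vertex are $G(1)$ and $G(0)$, and I would take $G_1=G(1)$ and $H_1=G(0)$. They are non-isomorphic (one has a loop, one does not), have degree sequences $(1)$ and $(0)$, have the trivially alternating binary sequences $(1)$ and $(0)$ which are bit-flips of one another, and are connected and disconnected respectively by the stated criterion ``connected exactly when the last bit is $1$''; the leading bits $1$ and $0$ agree with $1\bmod 2$ and $(1+1)\bmod 2$.

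For the inductive step, suppose all six claims hold for $n-1$ and let $G$ be an arbitrary anti-regular graph with loops on $n\geq 2$ vertices. By the Lemma, $d(G)$ equals either $(1,2,\ldots,n)$ or $(0,1,\ldots,n-1)$. In the first case $G$ has a unique vertex $u$ of degree $n$; being adjacent to all $n$ vertices, $u$ carries a loop and is joined to every other vertex, so deleting $u$ lowers every remaining degree by exactly $1$ and leaves an anti-regular graph with loops on $n-1$ vertices with degree sequence $(0,1,\ldots,n-2)$. By the inductive hypothesis this graph is isomorphic to $H_{n-1}$, and since adjoining a dominating vertex to a given graph yields a uniquely determined graph, the isomorphism type of $G$ is forced. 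In the second case $G$ has a unique isolated vertex $u$, deleting it changes no other degree, the result is $G_{n-1}$ by the inductive hypothesis, and again $G$ is forced. Hence there are at most two anti-regular graphs with loops on $n$ vertices.

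Next I would exhibit these two graphs concretely. Writing $b$ and $c$ for the binary sequences of $H_{n-1}$ and $G_{n-1}$, set $G_n:=G(b_1,\ldots,b_{n-1},1)$ and $H_n:=G(c_1,\ldots,c_{n-1},0)$; appending a $1$ (resp.\ a $0$) adjoins a dominating (resp.\ isolated) vertex, so these are exactly the two graphs isolated above. A direct degree count gives $d(G_n)=(1,\ldots,n)$ and $d(H_n)=(0,\ldots,n-1)$, so both exist, are non-isomorphic, and (i) and (iii) follow. For (iv) I would carry in the inductive hypothesis the extra information that $b$ is alternating and ends in $0$ and that $c$ is alternating and ends in $1$; then $(b,1)$ and $(c,0)$ are again alternating and end in $1$ and $0$, so the strengthened hypothesis is maintained. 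For (vi), the inductive hypothesis that $b$ and $c$ are bit-flips of each other gives that $(c,0)$ and $(b,1)$ are bit-flips; combined with the observation --- read directly off the displayed adjacency matrix, whose $(i,j)$ entry is $b_{\max(i,j)}$, using the natural convention that the complement flips loops as well --- that the complement of $G(s)$ is $G(\bar{s})$, this yields $\overline{G_n}=H_n$, while the connectedness half of (ii) is the last-bit criterion. Finally, an alternating binary sequence of length $n$ ending in $1$ begins with $1$ when $n$ is odd and with $0$ when $n$ is even, i.e.\ with $n\bmod 2$, and its bit-flip begins with $(n+1)\bmod 2$; this gives (v) and the leading bit in (vi).

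The main thing to watch is the bookkeeping in the inductive step: one must verify that deleting the degree-$n$ (resp.\ degree-$0$) vertex lands precisely on the degree sequence handled by the inductive hypothesis, and that ``adjoin a dominating/isolated vertex'' is a well-defined operation so that $G$ is genuinely pinned down. It is also essential to strengthen the inductive hypothesis to record the parity of the final bit of each binary sequence, since the alternating property is preserved only when one appends the complement of the current last bit.
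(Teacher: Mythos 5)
Your proof is correct and follows essentially the same route as the paper: induction on $n$, building the connected graph by adjoining a dominating vertex to the disconnected one on $n-1$ vertices (and vice versa with an isolated vertex), and forcing uniqueness by deleting the degree-$n$ or degree-$0$ vertex and invoking the inductive hypothesis. Your extra bookkeeping on the alternating sequences, last bits, and the fact that the complement of $G(s)$ is $G(\bar{s})$ just makes explicit what the paper leaves implicit.
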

\begin{proof}
The proof is by induction.  The base case $n=1$ has been proved above. By induction, assume that we have proved the claim for some $n\geq 1$.  Hence, $G_n$ and $H_n$ denote the anti-regular graphs with loops on $n$ vertices, where $G_n$ is connected, $H_n$ is the complement of $G_n$ and is disconnected, $d(G_n) = (1,2,\ldots,n)$, and $d(H_n) = (0,1,\ldots, n-1)$.  The binary sequence of $G_n$ begins with $n \bmod 2$ and $H_n$ has binary sequence beginning with $(n+1)\bmod 2$.  Let $G_{n+1}$ be the threshold graph obtained by adding the dominating vertex $v_{n+1}$ to $H_n$.  Then clearly $d(G_{n+1}) = (1,2,\ldots,n,n+1)$, $G_{n+1}$ is connected, and $G_{n+1}$ has binary sequence beginning with $(n+1)\bmod 2$.  On the other hand, let $H_{n+1}$ be the threshold graph obtained by adding the isolated vertex $v_{n+1}$ to $G_n$.  Then clearly $H_{n+1}$ is disconnected and after a relabelling of the vertices we have $d(H_{n+1}) = (0,1,2,\ldots, n)$.  The binary sequence of $H_{n+1}$ begins with $n \bmod 2 \equiv (n+1)+1\bmod 2$.  By construction and the induction hypothesis, $H_{n+1}$ is the complement of $G_{n+1}$.  If now $G$ is an anti-regular graph with loops on $n+1$ vertices with $d(G) = (1,2,\ldots, n+1)$ then the graph $G - v_{n+1}$ has degree sequence $(0,1,\ldots,n-1)$ and therefore $G$ is $G_{n+1}$ by the induction hypothesis.  On the other hand, if $G$ has degree sequence $d(G) = (0,1,\ldots,n)$ then $G - v_1$ has degree sequence $(1,2,\ldots,n)$ and thus $G$ is $H_{n+1}$ by the induction hypothesis.
\end{proof}

\section{The eigenvalues of anti-regular graphs with loops}
In this section we show that, contrary to the case of the simple anti-regular graphs \cite{CA-JL-EP-BS:18}, the eigenvalues of the adjacency matrix of the anti-regular graph with loops have closed-form formulas.  To that end we recall that $G_n$ denotes the connected and $H_n$ denotes the disconnected anti-regular graph with loops on $n\geq 1$ vertices.  We begin with the following.
\begin{lemma}
Let $G_n$ be the connected anti-regular graph with loops on $n\geq 1$ vertices.  Then the adjacency matrix $A(G_n)$ is permutation similar to the Hankel matrix
\[
M_n = 
\begin{bmatrix}
&&&&1\\
&&&\iddots &1\\
&&\iddots&\iddots & \vdots\\
&\iddots&\iddots&&\vdots\\
1&1&\cdots&\cdots&1
\end{bmatrix}.
\]
\end{lemma}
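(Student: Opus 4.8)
The plan is to exhibit an explicit permutation of the vertices of $G_n$ that turns its adjacency matrix into $M_n$. By Theorem~2.1, $G_n$ is the threshold graph whose binary sequence $b=(b_1,\ldots,b_n)$ is alternating and begins with $n\bmod 2$; so $b_j=1$ exactly when $j\equiv n\pmod 2$, i.e.\ the dominating vertices are those $v_j$ with $j$ of the same parity as $n$ (in particular $v_n$ is dominating, consistent with $G_n$ being connected). Writing out the adjacency matrix $A(G_n)$ from the displayed formula for $A(G(b))$, the $(i,j)$ entry for $i\le j$ equals $b_j$, so $A(G_n)_{ij}=1$ precisely when $\max(i,j)\equiv n\pmod 2$.

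Next I would compute the entries of the Hankel matrix $M_n$. From the display, $M_n$ has ones exactly on and below the main \emph{anti}-diagonal; that is, $(M_n)_{k\ell}=1$ iff $k+\ell\ge n+1$, and $0$ otherwise. So the goal reduces to finding a permutation $\sigma$ of $\{1,\ldots,n\}$ such that
\[
\max(\sigma(k),\sigma(\ell))\equiv n \pmod 2 \iff k+\ell\ge n+1 .
\]
The natural choice is the permutation that interleaves the indices by parity: list first all indices of the parity \emph{opposite} to $n$ in increasing order, then all indices of the same parity as $n$ in increasing order. Concretely, relabel so that the new ordering of the vertices is $v_{j}$ with $j\not\equiv n$ first (these are the isolated-type vertices, smallest degree) and $v_j$ with $j\equiv n$ last (dominating-type vertices, largest degree); this is exactly the ordering by increasing degree. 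Under this relabelling, a pair of new positions $(k,\ell)$ has $\max$ of the \emph{old} labels of the same parity as $n$ iff at least one of positions $k,\ell$ lies in the "second half," and one checks that the threshold for this to happen as $k$ and $\ell$ range is precisely $k+\ell\ge n+1$.

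The cleanest way to carry this out, and the step I expect to require the most care, is to verify the index bookkeeping in the two cases $n$ even and $n$ odd separately (the sizes of the two parity classes differ by one, and which class contains $v_n$ flips), confirming in each case that the reindexed adjacency matrix has a $1$ in position $(k,\ell)$ exactly when $k+\ell\ge n+1$. An alternative, perhaps slicker, route is induction on $n$: deleting the last row and column of $M_n$ (the all-ones row/column) and simultaneously removing the dominating vertex $v_n$ from $G_n$ leaves $H_{n-1}$, whose adjacency matrix is the all-zeros-bordered complement; since $A(H_{n-1})=J-I-A(G_{n-1})$ only up to the loop bookkeeping, it is in fact easier to peel off instead the two extreme-degree vertices at once and relate $M_n$ to $M_{n-2}$. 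Either way the combinatorics is routine once the correct permutation — sorting vertices by degree — is identified; the substantive content is simply recognizing that the "staircase" pattern of an alternating threshold sequence, read in degree order, is the anti-triangular Hankel pattern of $M_n$.
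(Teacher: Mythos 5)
Your reduction of the problem is correct: $A(G_n)_{ij}=b_{\max(i,j)}$ with $b_j=1$ iff $j\equiv n\pmod 2$, and $(M_n)_{k\ell}=1$ iff $k+\ell\ge n+1$, so one needs a permutation $\sigma$ with $\max(\sigma(k),\sigma(\ell))\equiv n\pmod 2$ iff $k+\ell\ge n+1$. But the explicit permutation you then write down is wrong, and the one sentence that would carry the proof (``one checks that the threshold \ldots is precisely $k+\ell\ge n+1$'') is false for it. Take $n=4$, where $b=(0,1,0,1)$: your ordering lists the odd-indexed vertices in increasing order and then the even-indexed ones, i.e.\ $(v_1,v_3,v_2,v_4)$. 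Then the $(1,3)$ entry of the permuted matrix is $A(G_4)_{1,2}=b_2=1$, while $(M_4)_{1,3}=0$ since $1+3<5$. The source of the error is your parenthetical claim that this parity-block ordering ``is exactly the ordering by increasing degree'': it is not, because within the non-dominating class the degree of $v_j$ is $(n-j+1)/2$, which \emph{decreases} with $j$, so those vertices must be listed in decreasing index order. The genuinely correct permutation is the degree sort itself: put in position $k$ the vertex of degree $k$, namely $v_{n+1-2k}$ for $k\le\lfloor n/2\rfloor$ and $v_{2k-n}$ for $k>\lfloor n/2\rfloor$; with these formulas the verification that two vertices of degrees $k$ and $\ell$ are adjacent in $G_n$ iff $k+\ell\ge n+1$ (and that the loop at degree $k$ occurs iff $2k\ge n+1$) is a short computation, uniform in the parity of $n$. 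As written, your proof neither states the right permutation nor performs that verification, so it does not go through.

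For comparison, the paper avoids any explicit permutation and argues by strong induction: using the binary-sequence labellings, $A(G_{n+1})$ has $A(H_n)$ as its leading principal block bordered by a row and column of ones, $A(H_n)$ is $A(G_{n-1})$ bordered by zeros (hence permutation similar to the zero-bordered form with the zero first), and $M_{n+1}$ has exactly the block form
\[
M_{n+1}=\begin{bmatrix} 0 & \mathbf{0}^T & 1\\ \mathbf{0} & M_{n-1} & \mathbf{1}\\ 1 & \mathbf{1}^T & 1\end{bmatrix},
\]
so the induction closes immediately. This is essentially the ``peel off the two extreme-degree vertices and relate $M_n$ to $M_{n-2}$'' route you mention in passing; if you prefer your direct approach, it can be salvaged with the corrected degree-sorted permutation (or, more slickly, by observing that $M_n$ is the adjacency matrix of a connected graph with loops whose degrees are $1,2,\ldots,n$ and invoking the uniqueness in Theorem 2.1), but the argument as submitted has a genuine gap.
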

\begin{proof}
We first note that for $n\geq 3$
\[
M_n = \begin{bmatrix} 0 & \bs{0}^T & 1 \\[2ex] \bs{0} & M_{n-2} & \bs{1}\\[2ex] 1 & \bs{1}^T & 1 \end{bmatrix}
\]
where $M_1=1$.  The proof of the lemma is by strong induction.  For $n=1$, we have $A(G_1) = M_1$ and for $n=2$ we have $A(G_2) = M_2$, and so the claim is trivial for these cases.  Assume by strong induction that the claim is true for all $k\in\{1,\ldots, n\}$ where $n\geq 2$.  Using the labelling of the vertices corresponding to the binary sequence of $G_{n+1}$ we have 
\begin{equation}\label{eqn:A-Gn1}
A(G_{n+1}) = \begin{bmatrix} A(H_n) & \bs{1}\\[2ex] \bs{1}^T & 1\end{bmatrix}.
\end{equation}  
Similarly, using the labelling of the vertices corresponding to the binary sequence of $H_n$ we have 
\[
A(H_n) = \begin{bmatrix} A(G_{n-1}) & \bs{0} \\[2ex] \bs{0}^T & 0 \end{bmatrix}.
\]
It is not hard to see that $A(H_n)$ is permutation similar to 
\[
\begin{bmatrix} 0 & \bs{0}^T \\[2ex] \bs{0} & A(G_{n-1})\end{bmatrix}.
\]
By the induction hypothesis, $A(G_{n-1})$ is permutation similar to $M_{n-1}$ and therefore $A(H_n)$ is permutation similar to
\[
\begin{bmatrix} 0 & \bs{0}^T \\[2ex] \bs{0} & M_{n-1}\end{bmatrix}.
\]   
Therefore, from \eqref{eqn:A-Gn1} it follows that $A(G_{n+1})$ is permutation similar to
\[
\begin{bmatrix} 0 & \bs{0}^T & 1\\[2ex] \bs{0} & M_{n-1} & \bs{1} \\[2ex] 1 & \bs{1}^T & 1\end{bmatrix} = M_{n+1}.
\]

\end{proof}

We now determine the eigenvalues of $M_n$, and thus also of $A(G_n)$.  First of all, it is straightforward to show by induction that
\[
\det(M_n) = -\det(M_{n-2}) = (-1)^{\lfloor (n-1)/2\rfloor}.
\]
More importantly, by inspection we have
\[
M_n^{-1} = \begin{bmatrix}
&&&-1&1\\
&&\iddots&\iddots&\\
&\iddots&\iddots&&\\
-1&\iddots&&&\\
1&&&&
\end{bmatrix}.
\]
If $D=\textup{diag}(1,-1,1,\ldots,(-1)^{n+1})$ then $D^{-1} = D$ and it is straightforward to verify that
\[
Y_n = D M_n^{-1} D = 
\begin{bmatrix}
&&&1&1\\
&&\iddots&\iddots&\\
&\iddots&\iddots&&\\
1&\iddots&&&\\
1&&&&
\end{bmatrix}.
\]
If $n$ is even let $\sigma:\{1,2,\ldots,n\}\rightarrow\{1,2,\ldots,n\}$ be the permutation
\[
\sigma = \begin{pmatrix} 1 & 2 & 3 & \cdots & \frac{n}{2} & \frac{n}{2}+1 & \frac{n}{2}+2 & \cdots & n-1 & n\\[2ex]
2 & 4 & 6 & \cdots & n & n-1 & n-3 & \cdots & 3 & 1\end{pmatrix},
\]
that is, $\sigma(k) = 2k$ for $k=1,2,\ldots,\frac{n}{2}$ and $\sigma(\frac{n}{2}+j) = n - (2j-1)$ for $j=1,2,\ldots,\frac{n}{2}$.  If $n$ is odd then let $\sigma:\{1,2,\ldots,n\}\rightarrow\{1,2,\ldots,n\}$ be the permutation
\[
\sigma = \begin{pmatrix} 1 & 2 & 3 & \cdots & \frac{n-1}{2} & \frac{n+1}{2} & \frac{n+1}{2}+1 & \cdots & n-1 & n\\[2ex]
2 & 4 & 6 & \cdots & n-1 & n & n-2 & \cdots & 3 & 1\end{pmatrix}.
\]
Let $P$ be the corresponding permutation matrix for $\sigma$ such that $Px=(x_{\sigma(1)}, x_{\sigma(2)},\ldots,x_{\sigma(n)})$ for $x\in\real^n$.  A straightforward proof by induction shows that
\[
X_n = P^T Y_n P = (-1)^{n+1}
\begin{pmatrix}
0&1&&&\\
1&\ddots&\ddots&&\\
&\ddots&\ddots&\ddots&\\
&&\ddots&0&1\\
&&&1&1
\end{pmatrix}.
\]
The eigenvalues of $X_n$ are known explicitly \cite[Theorem 2]{WY:05} and are given by
\[
\lambda_j = -2 (-1)^n \cos\left(\frac{2j-1}{2n+1}\pi\right),\quad j=1,2,\ldots,n.
\]
As a consequence, we obtain the following.
\begin{theorem}
Let $G_n$ be the connected anti-regular graph with loops on $n\geq 1$ vertices.  The eigenvalues $\lambda_1,\lambda_2,\ldots,\lambda_n$ of the adjacency matrix of $G_n$ are
\[
\lambda_j = \frac{(-1)^{n+1}}{2\cos\left(\frac{2j-1}{2n+1}\pi\right)},\quad j=1,2,\ldots,n.
\]
Consequently, the eigenvalues of $G_n$ are simple.
\end{theorem}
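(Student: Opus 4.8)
The plan is to chain together the reductions already assembled before the statement. By Lemma~3.1, $A(G_n)$ is permutation similar to the Hankel matrix $M_n$, so the two matrices have the same characteristic polynomial and it suffices to determine the spectrum of $M_n$. Since $\det(M_n) = (-1)^{\lfloor (n-1)/2\rfloor}\neq 0$, the matrix $M_n$ is invertible, and $\mu$ is an eigenvalue of $M_n$ of multiplicity $m$ if and only if $1/\mu$ is an eigenvalue of $M_n^{-1}$ of multiplicity $m$. The three matrices $M_n^{-1}$, $Y_n = D M_n^{-1} D$, and $X_n = P^T Y_n P$ are pairwise similar — the first conjugacy using the involution $D=D^{-1}$, the second the permutation matrix $P$ — so $X_n$ and $M_n^{-1}$ have the same eigenvalues. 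Hence the eigenvalues of $A(G_n)$ are exactly the reciprocals of the eigenvalues of $X_n$.

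Next I would insert the explicit spectrum of $X_n$. By \cite[Theorem 2]{WY:05}, the eigenvalues of $X_n$ are $\mu_j = -2(-1)^n\cos\bigl(\tfrac{2j-1}{2n+1}\pi\bigr)$ for $j=1,\ldots,n$; applying this requires only checking that the stated induction really identifies $X_n$ with $(-1)^{n+1}$ times the tridiagonal matrix to which that reference applies (zero diagonal except for the bottom-right $1$, and $1$'s on both off-diagonals). Taking reciprocals, $\lambda_j = \mu_j^{-1} = \dfrac{1}{-2(-1)^n\cos(\tfrac{2j-1}{2n+1}\pi)} = \dfrac{(-1)^n}{-2\cos(\tfrac{2j-1}{2n+1}\pi)} = \dfrac{(-1)^{n+1}}{2\cos(\tfrac{2j-1}{2n+1}\pi)}$, which is the claimed formula; here I used $1/(-1)^n = (-1)^n$ and $-(-1)^n = (-1)^{n+1}$.

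Finally, for the simplicity claim I would note that the $n$ angles $\theta_j = \tfrac{2j-1}{2n+1}\pi$, $j=1,\ldots,n$, are pairwise distinct and lie in the open interval $(0,\pi)$, since $\tfrac{2n-1}{2n+1}<1$. Because $\cos$ is strictly decreasing on $(0,\pi)$, the numbers $\cos\theta_j$ are pairwise distinct; and $\cos\theta_j\neq 0$, since $\theta_j = \pi/2$ would force the even integer $2(2j-1)$ to equal the odd integer $2n+1$. Hence each $\lambda_j$ is a well-defined real number and the $\lambda_j$ are pairwise distinct, so the $n\times n$ matrix $A(G_n)$ has $n$ distinct eigenvalues, i.e.\ all of its eigenvalues are simple. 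There is no deep obstacle remaining: the heavy lifting — the permutation similarities and the reduction of the Hankel matrix to tridiagonal form — has already been carried out, and the only genuinely delicate point is the sign bookkeeping in passing from $X_n$ to $M_n$ by reciprocation, which is licensed precisely by $\det(M_n)\neq 0$.
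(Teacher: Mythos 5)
Your proposal is correct and follows essentially the same route as the paper, which obtains the theorem directly ``as a consequence'' of the chain of similarities $A(G_n)\sim M_n$, $M_n^{-1}\sim Y_n\sim X_n$ and the known spectrum of the tridiagonal matrix $X_n$ from \cite[Theorem 2]{WY:05}, with eigenvalues of $A(G_n)$ recovered by reciprocation. Your added verifications (invertibility via $\det(M_n)\neq 0$, the sign bookkeeping, and the distinctness of the cosines giving simplicity) are exactly the details the paper leaves implicit.
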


Let $\sigma(n)$ denote the set of the eigenvalues of $G_n$.  Then the eigenvalues of $H_n$ are therefore $\{0\} \cup \sigma(n-1)$, where $\sigma(0)=\emptyset$.  Finally, using the closed-form expressions for the eigenvalues of $G_n$ the following is immediate.

\begin{corollary}
Let $\sigma(n)$ denote the eigenvalues of the connected anti-regular graph with loops on $n\geq 1$ vertices.  Then the closure of $\bigcup_{n\geq 1} \sigma(n)$ is $(-\infty,-\frac{1}{2}] \cup [\frac{1}{2},\infty)$.
\end{corollary}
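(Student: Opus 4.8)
The plan is to work directly from the closed-form expression $\lambda_j = \frac{(-1)^{n+1}}{2\cos\left(\frac{2j-1}{2n+1}\pi\right)}$ supplied by the preceding theorem, abbreviating $\theta_{n,j} = \frac{2j-1}{2n+1}\pi$ for the angles that occur. First I would record two elementary facts: for $1\le j\le n$ all the $\theta_{n,j}$ lie in the open interval $(0,\pi)$, and none of them equals $\pi/2$, since $\frac{2j-1}{2n+1}=\frac12$ would force the even number $2(2j-1)$ to equal the odd number $2n+1$. Consequently $0<|\cos\theta_{n,j}|\le\cos\frac{\pi}{2n+1}<1$, so $|\lambda_j| = \frac{1}{2|\cos\theta_{n,j}|}>\frac12$. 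This already shows $\bigcup_{n\ge1}\sigma(n)\subseteq(-\infty,-\tfrac12)\cup(\tfrac12,\infty)$, and hence the closure is contained in $(-\infty,-\tfrac12]\cup[\tfrac12,\infty)$.

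For the reverse inclusion I would first note that each $\sigma(n)$ is symmetric about $0$: since $\pi-\theta_{n,j}=\theta_{n,n+1-j}$ and $\cos(\pi-\theta)=-\cos\theta$, one gets $\lambda_{n+1-j}=-\lambda_j$, so $\sigma(n)=-\sigma(n)$. It therefore suffices to approximate an arbitrary $t\in[\tfrac12,\infty)$. For $t>\tfrac12$ put $\theta=\arccos\frac{1}{2t}\in(0,\tfrac\pi2)$, so that $\frac{1}{2\cos\theta}=t$. The crucial observation is that the finite sets $\{\theta_{n,j}:1\le j\le n\}$ become dense in $(0,\pi)$ as $n\to\infty$: consecutive angles differ by $\frac{2\pi}{2n+1}$ and the extreme ones are $\frac{\pi}{2n+1}$ and $\frac{(2n-1)\pi}{2n+1}$, so every point of $[0,\pi]$ lies within $\frac{\pi}{2n+1}$ of some $\theta_{n,j}$. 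Choosing for each $n$ an index $j(n)$ with $|\theta_{n,j(n)}-\theta|\le\frac{\pi}{2n+1}\to0$ and using continuity of $\theta\mapsto\frac{1}{2\cos\theta}$ on $(0,\pi)\setminus\{\pi/2\}$, we get $\frac{1}{2\cos\theta_{n,j(n)}}\to t$; by the symmetry just noted $\pm\frac{1}{2\cos\theta_{n,j(n)}}\in\sigma(n)$, so $t$ belongs to the closure. The endpoint $t=\tfrac12$ is handled by the extreme eigenvalue $j=1$: $\frac{1}{2\cos\theta_{n,1}}=\frac{1}{2\cos(\pi/(2n+1))}\to\tfrac12$. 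Together with symmetry this shows the closure contains $(-\infty,-\tfrac12]\cup[\tfrac12,\infty)$, and combined with the first paragraph it is exactly that set.

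The computations are all routine; the only point needing a moment's care is the bookkeeping with the factor $(-1)^{n+1}$, which is why I would extract the symmetry $\lambda_{n+1-j}=-\lambda_j$ at the outset — it makes the sign irrelevant to the density argument. The secondary subtlety is that the target set is closed, so the two endpoints $\pm\tfrac12$ must be produced as honest limits of eigenvalues (of the extreme indices), rather than merely approached from within $\bigcup_{n\ge1}\sigma(n)$.
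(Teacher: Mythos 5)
Your overall strategy (the inclusion $|\lambda_j|>\tfrac12$ plus density of the angles $\theta_{n,j}=\tfrac{2j-1}{2n+1}\pi$ in $(0,\pi)$) is the right one and matches what the paper leaves as ``immediate,'' and your first paragraph is correct. However, the symmetry lemma you extract at the outset is false: $\pi-\theta_{n,j}=\frac{2(n+1-j)}{2n+1}\pi$, whereas $\theta_{n,n+1-j}=\frac{2(n+1-j)-1}{2n+1}\pi$, so the two differ by $\frac{\pi}{2n+1}$ and the identity $\lambda_{n+1-j}=-\lambda_j$ does not hold; consequently $\sigma(n)\neq-\sigma(n)$ in general. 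Concretely, $\sigma(1)=\{1\}$ and $\sigma(2)=\{\tfrac{1+\sqrt5}{2},\tfrac{1-\sqrt5}{2}\}$, neither of which is symmetric about $0$. Since you use this exact symmetry twice --- to reduce the reverse inclusion to targets $t\ge\tfrac12$, and to claim $\pm\frac{1}{2\cos\theta_{n,j(n)}}\in\sigma(n)$ --- the second half of your argument has a genuine (though repairable) gap. (A smaller slip: the nearest angle to points near $\pi$ is at distance up to $\frac{2\pi}{2n+1}$, not $\frac{\pi}{2n+1}$; harmless for density.)

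The repair is short and keeps your structure: drop the per-$n$ symmetry and handle the sign $(-1)^{n+1}$ by choice of angle or of parity. For instance, restrict to odd $n$, so every eigenvalue is $\frac{1}{2\cos\theta_{n,j}}$; given $t$ with $|t|>\tfrac12$, set $\theta=\arccos\frac{1}{2t}\in(0,\pi)\setminus\{\tfrac\pi2\}$ and pick $j(n)$ with $\theta_{n,j(n)}\to\theta$, which yields $\lambda_{j(n)}\to t$ by continuity of $\theta\mapsto\frac{1}{2\cos\theta}$ away from $\tfrac\pi2$ (this works for both signs of $t$, since $\cos$ takes both signs on $(0,\pi)$). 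The endpoints are then obtained as limits of the extreme eigenvalues along odd $n$: $j=1$ gives $\frac{1}{2\cos(\pi/(2n+1))}\to\tfrac12$ and $j=n$ gives $\frac{1}{2\cos((2n-1)\pi/(2n+1))}\to-\tfrac12$. Equivalently, keep all $n$ and approximate $\theta$ or $\pi-\theta$ according to the sign of $(-1)^{n+1}t$; either way the conclusion stands, but as written the symmetry step would not survive refereeing.
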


\section{Acknowledgements}
The authors acknowledge the support of the National Science Foundation under Grant No. ECCS-1700578.

\bigskip
\baselineskip 1.25em


\end{document}